\tikzset{
every node/.style={draw, circle, inner sep=2pt}
}
\newtheorem{theorem}{Theorem}[section]
\newtheorem{proposition}[theorem]{Proposition}
\newtheorem{corollary}[theorem]{Corollary}
\theoremstyle{definition}
\newtheorem{definition}[theorem]{Definition}
\newtheorem{remark}[theorem]{Remark}
\newtheorem{example}[theorem]{Example}
\newcommand{\trans}{^\top}
\newcommand{\bzero}{\mathbf{0}}
\newcommand{\bx}{\mathbf{x}}
\newcommand{\eval}{\operatorname{eval}}
\newcommand{\fchar}{\operatorname{char}}
\title{Confluent Vandermonde matrix and related topics}
\author{
Chi-Kwong Li
\thanks{Department of Mathematics, College of William \& Mary, Williamsburg, VA 23187, USA. (ckli@math.wm.edu)}
\and
Jephian C.-H.~Lin
\thanks{Department of Applied Mathematics, National Sun Yat-sen University, Kaohsiung 80424, Taiwan (jephianlin@gmail.com)}
}
\date{\today}
\begin{document}

\maketitle

\begin{abstract}
In this note, we explore the connections between the confluent Vandermonde matrix over an arbitrary field and several mathematical topics, including 
interpolation polynomials, Hasse derivatives, LU factorization, companion matrices and their Jordan forms, and the partial fraction decomposition. Using a unified approach based on polynomial evaluations and derivative computations at selected points, we 
provide accessible proofs that not only clarify key results but also offer insights for both experienced researchers and those new 
to the subject.
\end{abstract}  

\noindent{\bf Keywords:} 
Confluent Vandermonde matrix, 
interpolation polynomials, Hasse derivatives, LU factorization, companion matrix, Jordan form,
partial fraction decomposition.

\medskip

\noindent{\bf AMS subject classifications:}
15A20, 
15A21, 
15A23, 
33C45 


\section{Introduction}

Let $\Lambda = \{\lambda_1, \ldots, \lambda_d\}$ be a set of $d$ distinct real numbers.  The corresponding \emph{Vandermonde matrix} is the $d\times d$ matrix
\begin{equation}
\label{eq:vlam}
    V_\Lambda = \begin{bmatrix}
      1 & \lambda_1 & \cdots & \lambda_1^{d-1} \\
      1 & \lambda_2 & \cdots & \lambda_2^{d-1} \\
      \vdots & \vdots & ~ & \vdots \\
      1 & \lambda_d & \cdots & \lambda_d^{d-1} \\
    \end{bmatrix}.
\end{equation}
The Vandermonde matrix has been used in polynomial interpolation, coding theory, signal processing, differential equations, and recursive relations; see \cite{Rushanan89, Kalman84, Eller87, Brand68, MacDuffee50} and the references therein.  It was also used
to give alternative proofs for some classical results. For example, it was used to
prove a theorem of Abel \cite{GT93}, whose classical proof is based on the residue theorem.  
There are many elegant results on the Vandermonde matrix, and one of them is the following formula for the determinant of $V_\Lambda$ which appeared in many textbooks
\begin{equation}
\label{eq:det}
    \det(V_\Lambda) = \prod_{i<j}  (\lambda_j - \lambda_i).
\end{equation}
Rushanan\cite{Rushanan89} gave the LU factorization of $V_\Lambda$.
Many formulas for $V_\Lambda^{-1}$ were studied and rediscovered by researchers in pure and applied subjects; 
see, e.g., \cite{MS58, Klinger67}.  In particular, 
Parker\cite{Parker64} pointed out that the columns of $V_\Lambda^{-1}$ are exactly the coefficients of the Lagrange interpolation polynomials.  The generalization of $V_\Lambda$ whose columns are assigned with nonconsecutive powers was considered and revealed a strong
connection to symmetric functions through Schur functions; see, e.g., \cite{Mitchell1881, Heineman29,  Goodstein70, EI76, dSilva18, 
EPY24, Aitken39}.

Using an example, Aitken and Turnbull \cite[Chapter VI]{AT32} demonstrated an interesting fact that $V_\Lambda C_\Lambda V_\Lambda^{-1}$ is a diagonal matrix, where $C_\Lambda$ 
is the companion matrix of $(x - \lambda_1) \cdots (x - \lambda_d)$.  In the same chapter, the authors mentioned that
if $\lambda_1, \dots, \lambda_d$ are not distinct, 
one can extend the definition of the Vandermonde matrix to define the confluent Vandermonde matrix $V_\Lambda$ using the
higher derivatives of the polynomial $(x-\lambda_1)\cdots (x-\lambda_d)$ at the points $\lambda_j$,
which appear more than once; see the definition in \cref{sec:eval}. Then
$V_\Lambda C_\Lambda V_\Lambda^{-1}$ will be the Jordan canonical form of $C_\Lambda$.  The same points were later emphasized in Brand \cite{Brand64}.  

Researchers have obtained results on the confluent Vandermonde matrix because of its connections
and applications to many different subjects. However, these results are scattered across different areas of the
mathematical literature. In this paper, we give a systematic introduction to the confluent Vandermonde matrix. In particular,  
using a unified approach based on polynomial evaluations and derivative computations at selected points, we
provide accessible proofs that not only clarify key results but also offer insights for both experienced researchers and those new
to the subject.

\section{Evaluations of a polynomial at specific points}
\label{sec:eval}

Let  
\begin{equation}
\label{eq:poly}
    p(x) = c_0 + c_1x + \cdots + c_{d-1}x^{d-1}
\end{equation}
be a polynomial over $\mathbb{R}$.  Let $\Lambda = \{\lambda_1, \ldots, \lambda_d\}$ be a set of $d$ 
distinct real numbers and $V_\Lambda$ be as in \cref{eq:vlam}.  Then  
\begin{equation}
\label{eq:veval}
    V_\Lambda
    \begin{bmatrix} c_0 \\ \vdots \\ c_{d-1} \end{bmatrix}
    =
    \begin{bmatrix} p(\lambda_1) \\ \vdots \\ p(\lambda_d) \end{bmatrix}.
\end{equation}
Therefore, $V_\Lambda$ can be viewed as a transformation from a polynomial of degree at most $d-1$
to its evaluations at the points in $\Lambda$.  In fact, $V_\Lambda$ is a linear transformation from the 
linear space of polynomials with degrees bounded by $d-1$ to ${\mathbb R}^d$. 
Since $\det(V_\Lambda) \neq 0$, $V_\Lambda$ is invertible and any assignment of 
$p(\lambda_1), \ldots, p(\lambda_d)$ defines a unique polynomial $p(x)$ of degree bounded by $d-1$.  

When $\Lambda$ contains repeated values, the repeated $\lambda_i$ should provide not just the 
information $p(\lambda_i)$.  Instead, more information should be revealed at the point of $\lambda_i$, and 
the evaluations at its derivatives $p(\lambda_i)$, $p'(\lambda_i)$, $\ldots$, are natural choices.
In fact, one may view the higher derivatives at $\lambda_i$ as the information obtained by the limit process
when several distinct points converge to $\lambda_i$.

Note that if $p(x) = c_0 + c_1 x + \cdots + c_{d-1} x^{d-1}$, then
\[
    \begin{array}{lcrrrr}
        \frac{1}{0!}\cdot p(x) &= & c_0 +& c_1x +& c_2x^2 +& \cdots + c_{d-1}x^{d-1}, \\
        \frac{1}{1!}\cdot\frac{d}{dx}p(x) &= & & c_1 +& c_22x +& \cdots + c_{d-1}(d-1)x^{d-2}, \\
        \frac{1}{2!}\cdot\frac{d^2}{dx^2}p(x) &= & & & c_2 +& \cdots + c_{d-1}\left(\frac{(d-1)(d-2)}{2}\right)x^{d-3}, \\
        & \vdots &&&&
    \end{array}
\]
and, in general,
\[
    \frac{1}{r!}\cdot \frac{d^r}{dx^r} p(x) = 
    \sum_{i = r}^{d-1} c_i\binom{i}{r}x^{i-r}.
\]
Let $\mathcal{D}^{(r)} = \frac{1}{r!}\cdot \frac{d^r}{dx^r}$.  Then for any  $m \le d$,
\begin{equation}
\label{eqn:solve-der}
    \begin{bmatrix}
        1 & \lambda & \lambda^2 & ~ & \cdots & \lambda^{d-1} \\
        0 & \binom{1}{1} & \binom{2}{1}\lambda & ~ & \cdots & \binom{d}{1}\lambda^{d-2} \\
        0 & 0 & \binom{2}{2} & ~ & \cdots & \binom{d}{2}\lambda^{d-3} \\
        ~ & ~ & ~ & ~ & \vdots & ~ \\
        0 & \cdots & 0 & \binom{m-1}{m-1} & \cdots & \binom{d}{m-1}\lambda^{d-m}
    \end{bmatrix}
    \begin{bmatrix} c_0 \\ \vdots \\ c_{d-1} \end{bmatrix} 
    = 
    \begin{bmatrix} \mathcal{D}^{(0)}p(\lambda) \\ \vdots \\ \mathcal{D}^{(m-1)}p(\lambda) \end{bmatrix}.
\end{equation}

Kalman \cite{Kalman84} gave a detailed discussion on this approach, while the confluent Vandermonde matrix to be defined below can be traced back to Aitken and Turnbull \cite{AT32} and Aitken \cite{Aitken39}.

\begin{definition}
\label{def:vlamd}
Let $\lambda$ be a real number and $m,d$ integers with $m \leq d$.  Define the $m\times d$ matrix 
\[
    V_{\lambda^{(m)}}^{(d)} = 
    \begin{bmatrix}
        1 & \lambda & \lambda^2 & ~ & \cdots & \lambda^{d-1} \\
        0 & \binom{1}{1} & \binom{2}{1}\lambda & ~ & \cdots & \binom{d}{1}\lambda^{d-2} \\
        0 & 0 & \binom{2}{2} & ~ & \cdots & \binom{d}{2}\lambda^{d-3} \\
        ~ & ~ & ~ & ~ & \vdots & ~ \\
        0 & \cdots & 0 & \binom{m-1}{m-1} & \cdots & \binom{d}{m-1}\lambda^{d-m}
    \end{bmatrix}.
\]
Let $\Lambda = \{\lambda_1^{(m_1)}, \ldots, \lambda_q^{(m_q)}\}$ be a multiset consisting of distinct
numbers $\lambda_1, \dots, \lambda_q$ with multiplicities $m_1, \dots, m_q$ such that
$m_1 + \cdots + m_q = d$.  Define 
\[
    V_\Lambda = 
    \begin{bmatrix}
        - & V_{\lambda_1^{(m_1)}}^{(d)} & - \\
         & \vdots & \\
        - & V_{\lambda_q^{(m_q)}}^{(d)} & - \\
    \end{bmatrix}.
\]
The square matrix $V_\Lambda$ is called 
the \emph{confluent Vandermonde matrix} (a.k.a.\ \emph{the generalized Vandermonde matrix})
of $\Lambda$.  
\end{definition}

\begin{example} \label{basic-example}
Suppose $p(x) = c_0+c_1x + \cdots + c_4x^4$ and 
$\Lambda = \{\lambda_1^{(1)}, \lambda_2^{(2)}, \lambda_3^{(2)}\}$. Then 
$$V_\Lambda = \begin{bmatrix}
1 & \lambda_1 & \lambda_1^2 & \lambda_1^3& \lambda_1^{4} \\
1 & \lambda_2 & \lambda_2^2 & \lambda_2^3& \lambda_2^{4} \\
0 & 1 & 2 \lambda_2 & 3\lambda_2^2 & 4\lambda_2^3  \\
1 & \lambda_3 & \lambda_3^2 & \lambda_3^3 &  \lambda_3^{4} \\
0 & 1 & 2 \lambda_3 & 3\lambda_3^2 & 4\lambda_3^3  \\
\end{bmatrix}.$$
\end{example}

The confluent Vandermonde matrix $V_\Lambda$ of 
$\Lambda = \{\lambda_1^{(m_1)}, \ldots, \lambda_q^{(m_q)}\}$ with $m_1 + \cdots + m_q = d$  
has the property that 
\begin{equation}
\label{eq:vgeval}
    V_\Lambda 
    \begin{bmatrix} c_0 \\ \vdots \\ c_{d-1} \end{bmatrix} = 
    \begin{bmatrix}
    \mathcal{D}^{(0)}p(\lambda_1) \\ \vdots \\ \mathcal{D}^{(m_1-1)}p(\lambda_1) \\ 
    \vdots \\
    \mathcal{D}^{(0)}p(\lambda_q) \\ \vdots \\ \mathcal{D}^{(m_q-1)}p(\lambda_q)
    \end{bmatrix}.
\end{equation}
Let $[p(x)] = [c_0, \dots, c_{d-1}]\trans$ 
be the vector of coefficients of $p(x) = c_0 + c_1x + \cdots + c_{d-1}x^{d-1}$ 
and $\eval_\Lambda(p(x))$ the the evaluations of $p(x)$ 
as shown in the right-hand side of \cref{eq:vgeval}.  
Hence, we have $V_\Lambda [p(x)] = \eval_\Lambda(p(x))$ extending the classical result \cref{eq:veval}.

In summary, $V_\Lambda$ is a matrix that transforms a polynomial into its evaluations at designated points
and their derivatives according to the multiplicities.  

\section{Hasse derivative and confluent Vandermonde matrices over arbitrary fields}
 
Let ${\mathbb F}[x]$ be the set of polynomials over 
a general field $\mathbb{F}$, and let 
$p(x) = c_0 + c_1x + \cdots + c_{d-1}x^d\in {\mathbb F}[x]$.
For any nonnegative integer $r$, define the \emph{Hasse derivative} of $p(x)$ by
\[
    \mathcal{D}^{(r)} p(x) = 
    \sum_{i = r}^{d-1} c_i\binom{i}{r}x^{i-r}.
\]  
When $\mathbb{F} = \mathbb{R}$, it is the same as the definition $\mathcal{D}^{(r)} = \frac{1}{r!}\cdot \frac{d^r}{dx^r}$ 
mentioned in the last section. 

Note that the binomial coefficient $\binom{n}{k}$ is the coefficient of $x^k$ in the expansion of $(1 + x)^n$, which is well 
defined in any field and is the same as $\frac{n!}{k!(n-k)!}$ when $\mathbb{F}$ is of characteristic $0$.  For this sake, an 
alternative definition of the Hasse derivative is $\mathcal{D}^{(r)}p(x) = [y^r]p(x + y)$, which is the coefficient of $y^r$ in 
$p(x + y)$ when treated as a polynomial of $y$.  This can be verified by direct computation as follows.
\[
    \begin{aligned}\relax
        [y^r]p(x + y) &= [y^r]\sum_{i=0}^{d-1} c_i(x+y)^i
        = \sum_{i=0}^{d-1} c_i \cdot [y^r] (x+y)^i \\
        &= \sum_{i=r}^{d-1} c_i \cdot [y^r] (x+y)^i 
        = \sum_{i=r}^{d-1} c_i \cdot \binom{i}{r}x^{i-r} = \mathcal{D}^{(r)}p(x)
    \end{aligned}
\]

More intuitively, the evaluation $\mathcal{D}^{(r)}p(\lambda)$ is exactly the coefficient of $(x - \lambda)^r$ in the 
``Taylor's expansion'' of $p(x)$ at $\lambda$ as shown in the following. 

\begin{proposition}
\label{prop:taylor}
Let $\lambda\in\mathbb{F}$.  When $p(x) = c_0 + c_1x + \cdots + c_{d-1}x^{d-1}$ is written as 
\[
    p(x) = a_0 + a_1(x - \lambda) + \cdots + a_{d-1}(x - \lambda)^{d-1}.
\]
Then 
\[
    \mathcal{D}^{(r)}p(x) = \sum_{i=r}^{d-1} a_i \binom{i}{r}(x - \lambda)^{i-r}.
\]
Therefore, $\mathcal{D}^{(r)}p(\lambda) = a_r$ can be viewed as the $r$-th order term of $p(x)$ in its Taylor's expansion at $\lambda$.  
\end{proposition}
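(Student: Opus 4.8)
The plan is to rely on the shift characterization $\mathcal{D}^{(r)}p(x) = [y^r]\,p(x+y)$ established just above, together with the elementary fact that every $p$ of degree at most $d-1$ admits the claimed expansion in powers of $(x-\lambda)$. First I would note that writing $x = (x-\lambda) + \lambda$ and expanding by the binomial theorem exhibits coefficients $a_0,\dots,a_{d-1}\in\mathbb{F}$ with $p(x) = \sum_{i=0}^{d-1} a_i(x-\lambda)^i$; uniqueness is not strictly needed for the argument, but it holds because $1,(x-\lambda),\dots,(x-\lambda)^{d-1}$ span (hence form a basis of) the space of polynomials of degree at most $d-1$.

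Next I would compute $p(x+y)$ from this expansion: $p(x+y) = \sum_{i=0}^{d-1} a_i\bigl((x-\lambda)+y\bigr)^i$. Extracting the coefficient of $y^r$ term by term via the binomial theorem gives $[y^r]\bigl((x-\lambda)+y\bigr)^i = \binom{i}{r}(x-\lambda)^{i-r}$ for $i\ge r$ and $0$ otherwise, whence $\mathcal{D}^{(r)}p(x) = [y^r]p(x+y) = \sum_{i=r}^{d-1} a_i\binom{i}{r}(x-\lambda)^{i-r}$, which is the first assertion. Finally, evaluating this identity at $x=\lambda$ annihilates every summand with $i>r$, since then $(x-\lambda)^{i-r} = 0$, while the $i=r$ summand contributes $a_r\binom{r}{r} = a_r$; this yields $\mathcal{D}^{(r)}p(\lambda) = a_r$.

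I do not foresee a genuine obstacle. The only point requiring a little care is that the Hasse derivative was defined via the standard monomial basis, so one should not differentiate the $(x-\lambda)$-expansion term by term without justification; the shift formula $\mathcal{D}^{(r)}p(x) = [y^r]p(x+y)$ sidesteps this cleanly and works over an arbitrary field. As an alternative route, one could first check that $\mathcal{D}^{(r)}$ is $\mathbb{F}$-linear and that $\mathcal{D}^{(r)}\bigl[(x-\lambda)^i\bigr] = \binom{i}{r}(x-\lambda)^{i-r}$ directly (again by applying the shift formula to the single term $(x-\lambda)^i$), and then sum over $i$; this variant makes the linearity explicit but amounts to the same computation.
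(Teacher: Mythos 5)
Your proposal is correct and follows essentially the same route as the paper: both substitute the $(x-\lambda)$-expansion into the shift characterization $\mathcal{D}^{(r)}p(x) = [y^r]p(x+y)$, extract the coefficient of $y^r$ from $\bigl((x-\lambda)+y\bigr)^i$ via the binomial theorem, and then evaluate at $x=\lambda$ so that only the $i=r$ term survives. The extra remarks on existence of the expansion and on linearity are fine but not needed beyond what the paper does.
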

\begin{proof}
With the alternative definition, we may compute
\[
    \begin{aligned}
        \mathcal{D}^{(r)}p(x) &= [y^r]p(x + y) = [y^r] \sum_{i=0}^{d-1} a_i (x + y - \lambda)^i \\
        &= \sum_{i=0}^{d-1} a_i [y^r] (x - \lambda + y)^i 
        = \sum_{i=r}^{d-1} a_i [y^r] (x - \lambda + y)^i \\
        &= \sum_{i=r}^{d-1} a_i \binom{i}{r} (x - \lambda)^{i-r}. \\
    \end{aligned}
\]
When $\lambda$ is plugged into $\mathcal{D}^{(r)}p(x)$, each summand in the formula is zero except for $a_r \binom{r}{r} (x - \lambda)^{r - r} = a_r$.  
\end{proof}

This view of Taylor's expansion leads to the following identities.

\begin{corollary}
\label{cor:hasseshift}
Let $p(x)$ be a polynomial in $\mathbb{F}[x]$.  Then 
\[
    \mathcal{D}^{(r)}(x - \lambda)^m p(x) \big|_{x = \lambda} = 
    \begin{cases}
        \mathcal{D}^{(r - m)}p(\lambda) & \text{ if } r \geq m, \\
        0 & \text{ otherwise.}
    \end{cases}
\]

\end{corollary}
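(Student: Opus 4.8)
The plan is to reduce everything to the coefficient-extraction description $\mathcal{D}^{(r)}q(x) = [y^r]\,q(x+y)$ established just above, since that form linearizes both the substitution $x \mapsto \lambda$ and the multiplication by $(x-\lambda)^m$ in one clean step. First I would record the elementary fact that coefficient extraction in the variable $y$ commutes with evaluation in the variable $x$: for any $q \in \mathbb{F}[x]$,
\[
    \bigl(\mathcal{D}^{(r)}q(x)\bigr)\big|_{x=\lambda} = \bigl([y^r]\,q(x+y)\bigr)\big|_{x=\lambda} = [y^r]\,q(\lambda+y),
\]
because $q(x+y)$, regarded as a polynomial in $y$ with coefficients in $\mathbb{F}[x]$, has its substitution $x=\lambda$ act only on those coefficients, independently of the extraction $[y^r]$.

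Next I would apply this identity with $q(x) = (x-\lambda)^m p(x)$, so that $q(\lambda+y) = y^m\,p(\lambda+y)$, and invoke \cref{prop:taylor} (equivalently, the alternative definition of the Hasse derivative) in the form $p(\lambda+y) = \sum_{k\ge 0}\bigl(\mathcal{D}^{(k)}p(\lambda)\bigr)\,y^k$. Multiplying by $y^m$ shifts every exponent up by $m$, giving
\[
    y^m\,p(\lambda+y) = \sum_{k\ge 0}\bigl(\mathcal{D}^{(k)}p(\lambda)\bigr)\,y^{k+m},
\]
and reading off the coefficient of $y^r$ yields $\mathcal{D}^{(r-m)}p(\lambda)$ when $r\ge m$ (take $k=r-m$) and $0$ when $r<m$, since then no monomial $y^{k+m}$ with $k\ge 0$ has degree $r$. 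This is precisely the asserted case distinction.

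There is essentially no hard step here; the only point requiring a moment's care is the commutation of $[y^r]$ with the substitution $x=\lambda$, i.e.\ recognizing that $x$ and $y$ are genuinely independent indeterminates, which is what legitimizes the one-line computation. An equally short alternative avoids even this by working directly with Taylor coefficients: writing $p(x) = a_0 + a_1(x-\lambda) + \cdots$ as in \cref{prop:taylor} gives $(x-\lambda)^m p(x) = \sum_i a_i(x-\lambda)^{i+m}$, whose Taylor coefficient of index $r$ at $\lambda$ is $a_{r-m}$ for $r\ge m$ and $0$ otherwise; \cref{prop:taylor} then identifies that coefficient with $\mathcal{D}^{(r)}\bigl[(x-\lambda)^m p(x)\bigr]\big|_{x=\lambda}$ and identifies $a_{r-m}$ with $\mathcal{D}^{(r-m)}p(\lambda)$.
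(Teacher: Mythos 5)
Your proof is correct, and it matches the paper's intended argument: the paper states this corollary as an immediate consequence of the Taylor-expansion interpretation in \cref{prop:taylor}, which is exactly your second (coefficient-of-$(x-\lambda)^r$) argument, while your main $[y^r]\,q(\lambda+y)$ computation is just the same idea phrased through the alternative definition of the Hasse derivative. No gaps.
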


In the subsequent discussion, $\mathcal{D}^{(r)}$ will always be treated as the Hasse 
derivative so that results obtained are valid
for arbitrary fields. The simple observation in the corollary turns out to be a very useful tool for proving
results on the confluent Vandermonde matrix as we shall see.

One may see \cite{Goldschmidt03} for  more  information about the Hasse derivative.

\section{LU factorization}

In the subsequent discussion, we shall always 
let $\Lambda = \{\lambda_1^{(m_1)}, \ldots, \lambda_q^{(m_q)}\}$ 
for $q$ distinct elements in $\lambda_1, \dots, \lambda_q$ in a field ${\mathbb F}$
with multiplicities $m_1, \dots, m_q$ such that
$d = m_1 + \cdots + m_q$.  
We will find an LU factorization of the confluent Vandermonde matrix 
$V_\Lambda$ in the following.
To achieve our goal, consider the $d+1$ polynomials:
$$p_0(x) = 1,\  p_1(x) = (x - \lambda_1), \ \ldots,  
\ p_{m_1}(x) = (x - \lambda_1)^{m_1}, $$
$$p_{m_1+1} = (x-\lambda_1)^{m_1}(x - \lambda_2), \ \dots, \ p_{d}(x) 
= (x-\lambda_1)^{m_1} \cdots (x-\lambda_q)^{m_q}.$$ 
Thus, $p_i(x)$ is a degree $i$ polynomial with leading coefficient equal to 1.
Here we  only use $p_0(x), \dots, p_{d-1}(x)$, and consider 
\begin{equation}
\label{eq:lu}
    V_\Lambda
    \begin{bmatrix}
        | & ~ & | \\
        [p_0(x)] & \cdots & [p_{d-1}(x)] \\
        | & ~ & | \\
    \end{bmatrix} = 
    \begin{bmatrix}
        | & ~ & | \\
        \eval_\Lambda(p_0(x)) & \cdots & \eval_\Lambda(p_{d-1}(x)) \\
        | & ~ & | \\
    \end{bmatrix}.
\end{equation}
Let $U^{-1} = \Big[ [p_0(x)] \ \cdots \ [p_{d-1}(x)] \Big]$ and 
$L =  \Big[ \eval_\Lambda(p_0(x)) \ \cdots \ \eval_\Lambda(p_{d-1}(x))\Big]$.
Note that $U^{-1}$ is an upper triangular matrix with $1$'s on the diagonal. 
Thus, we can write it as the inverse of some matrix $U$, where $U = (U^{-1})^{-1}$ is an upper triangular matrix.  
Moreover, by \cref{cor:hasseshift}, $L$ is a lower triangular matrix with the diagonal entries $1$ for $m_1$ 
times, $(\lambda_2 - \lambda_1)^{m_1}$ for $m_2$ times, $(\lambda_3 - \lambda_1)^{m_1}(\lambda_3 - \lambda_2)^{m_2}$ 
for $m_3$ times, and so on.

This approach was used by Rushanan\cite{Rushanan89} to develop the LU factorization of $V_\Lambda$ when all elements in $\Lambda$ are distinct and $V_\Lambda$ is the classical Vandermonde matrix.  As we have seen, the same approach leads to the similar consequences for the confluent Vandermonde matrix.  Therefore, we have the following.

\begin{theorem}
\cref{eq:lu}, written as $V_\Lambda U^{-1} = L$,  leads to the LU factorization $V_\Lambda = LU$.
\end{theorem}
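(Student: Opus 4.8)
The plan is to simply assemble the factorization from the ingredients already laid out around \cref{eq:lu}. First I would record why \cref{eq:lu} holds at all: the identity $V_\Lambda[p(x)] = \eval_\Lambda(p(x))$ from \cref{eq:vgeval}, applied one column at a time to the polynomials $p_0(x), \dots, p_{d-1}(x)$, is exactly the matrix equation $V_\Lambda U^{-1} = L$ with $U^{-1} = \big[\,[p_0(x)]\ \cdots\ [p_{d-1}(x)]\,\big]$ and $L = \big[\,\eval_\Lambda(p_0(x))\ \cdots\ \eval_\Lambda(p_{d-1}(x))\,\big]$.

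Next I would verify the triangular shapes. Since each $p_i(x)$ has degree exactly $i$ with leading coefficient $1$, the coefficient vector $[p_i(x)]$ has a $1$ in position $i$ and zeros in all later positions, so $U^{-1}$ is upper triangular with unit diagonal; in particular $\det U^{-1} = 1$, so $U := (U^{-1})^{-1}$ exists, and the inverse of an upper-triangular unit-diagonal matrix is again upper triangular, so $U$ is upper triangular. For $L$, fix a column index $j$ and write $p_j(x) = (x-\lambda_1)^{m_1}\cdots(x-\lambda_{t-1})^{m_{t-1}}(x-\lambda_t)^{s}$ where $j = m_1 + \cdots + m_{t-1} + s$ with $0 \le s < m_t$. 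In the $\lambda_i$-block with $i < t$, every entry is $\mathcal{D}^{(r)}p_j(\lambda_i)$ with $r < m_i$, which vanishes by \cref{cor:hasseshift} since $(x-\lambda_i)^{m_i}\mid p_j(x)$; in the $\lambda_t$-block, $\mathcal{D}^{(r)}p_j(\lambda_t) = 0$ for $r < s$ and equals $\prod_{i<t}(\lambda_t - \lambda_i)^{m_i}$ for $r = s$, again by \cref{cor:hasseshift}. Reading these entries off in the row/column order fixed by \cref{def:vlamd} shows that $L$ is lower triangular, with diagonal entries $1$ repeated $m_1$ times, $(\lambda_2-\lambda_1)^{m_1}$ repeated $m_2$ times, and in general $\prod_{i<t}(\lambda_t-\lambda_i)^{m_i}$ repeated $m_t$ times — all nonzero because the $\lambda_i$ are distinct, so $L$ is invertible as well.

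Finally, multiplying $V_\Lambda U^{-1} = L$ on the right by $U$ gives $V_\Lambda = LU$ with $L$ lower triangular and $U$ upper triangular, which is the desired LU factorization. I do not expect a real obstacle here: the only step needing any care is the triangularity of $L$, namely matching the block structure of $\eval_\Lambda$ against the column index $j$, but \cref{cor:hasseshift} collapses that into the one-line divisibility check above, so the argument is essentially bookkeeping.
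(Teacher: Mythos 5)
Your proposal is correct and follows the same route as the paper: read \cref{eq:lu} column-by-column via \cref{eq:vgeval}, observe that $U^{-1}$ is unit upper triangular and (by \cref{cor:hasseshift}) that $L$ is lower triangular with nonzero diagonal, then multiply by $U$. You merely spell out the block-by-block verification of $L$'s triangularity that the paper compresses into a single sentence before the theorem.
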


This leads to a generalization of \cref{eq:det}.  The next theorem was mentioned in \cite{AT32} and proved in \cite{Aitken39}.  The proof in \cite{Aitken39} requires arithmetic on homogeneous symmetric functions and taking limits when $\mathbb{F} = \mathbb{R}$, while using the LU factorization allows us to observe the formula directly for $V_A$ over arbitrary $\mathbb{F}$.

\begin{theorem}
\label{thm:det}
For any $\Lambda = \{\lambda_1^{(m_1)}, \ldots, \lambda_q^{(m_q)}\}$, $\det(V_\Lambda) = \prod_{\substack{i,j\\i<j}} (\lambda_j - \lambda_i)^{m_im_j}$.
Consequently, $V_\Lambda$ is always invertible.
\end{theorem}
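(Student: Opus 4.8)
The plan is to read the determinant straight off the LU factorization just established. Since $V_\Lambda = LU$ with $U$ upper triangular having $1$'s on the diagonal (as $U^{-1}$ does), we get $\det(V_\Lambda) = \det(L)$, and $\det(L)$ is the product of the diagonal entries of $L$. So the whole task reduces to computing those diagonal entries explicitly and recognizing their product as $\prod_{i<j}(\lambda_j-\lambda_i)^{m_im_j}$.

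First I would identify the diagonal entry of $L$ in each block. Recall $L = \big[\eval_\Lambda(p_0(x)) \ \cdots \ \eval_\Lambda(p_{d-1}(x))\big]$, and the rows of $L$ are grouped into blocks indexed by $\lambda_1,\dots,\lambda_q$, the $k$-th block having $m_k$ rows corresponding to $\mathcal{D}^{(0)},\dots,\mathcal{D}^{(m_k-1)}$ evaluated at $\lambda_k$. The columns are also naturally grouped: columns indexed $m_1+\cdots+m_{k-1}$ through $m_1+\cdots+m_k-1$ correspond to $p_j(x)$ with $j$ in that range, each of which has the form $(x-\lambda_1)^{m_1}\cdots(x-\lambda_{k-1})^{m_{k-1}}\cdot(x-\lambda_k)^{s}$ for $s=0,\dots,m_k-1$. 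By \cref{cor:hasseshift} applied repeatedly (peeling off each factor $(x-\lambda_i)^{m_i}$ for $i<k$, which contributes the nonzero constant $(\lambda_k-\lambda_i)^{m_i}$ upon evaluation at $\lambda_k$, and then peeling off $(x-\lambda_k)^s$), the entry $\mathcal{D}^{(r)}p_j(x)\big|_{x=\lambda_k}$ vanishes when $r<s$ and equals $\prod_{i<k}(\lambda_k-\lambda_i)^{m_i}$ when $r=s$. This is precisely the statement that within the $k$-th diagonal block, $L$ is lower triangular with every diagonal entry equal to $\prod_{i<k}(\lambda_k-\lambda_i)^{m_i}$; since the block has $m_k$ rows, this diagonal value is repeated $m_k$ times.

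Then I would just multiply:
\[
    \det(V_\Lambda) = \det(L) = \prod_{k=1}^{q}\left(\prod_{i<k}(\lambda_k-\lambda_i)^{m_i}\right)^{m_k} = \prod_{k=1}^q\prod_{i<k}(\lambda_k-\lambda_i)^{m_im_k} = \prod_{i<j}(\lambda_j-\lambda_i)^{m_im_j},
\]
which is the claimed formula; invertibility of $V_\Lambda$ follows since each $\lambda_j-\lambda_i$ is nonzero for $i\ne j$, so the product is a nonzero element of $\mathbb{F}$.

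I do not expect a serious obstacle here — the LU factorization does all the heavy lifting. The one point that needs care is the bookkeeping in the previous paragraph: making sure \cref{cor:hasseshift} is applied in the right order and that the factors $(x-\lambda_i)^{m_i}$ for $i\ge k$ (which appear in $p_j$ for columns in later blocks, i.e. below the diagonal block) do not interfere — but they only make the off-diagonal blocks, and those lie strictly below the diagonal, so they are irrelevant to $\det(L)$. A secondary subtlety worth a sentence: the corollary as stated handles a single factor $(x-\lambda)^m$, so strictly one should either iterate it or note that $\mathcal{D}^{(r)}\big(f(x)g(x)\big)\big|_{x=\lambda_k}$ for $f(\lambda_k)\ne0$ and $g(x)=(x-\lambda_k)^s$ is $f(\lambda_k)\,\mathcal{D}^{(r)}g(x)\big|_{x=\lambda_k}$ when we only look at the lowest-order term — this follows from the Taylor-expansion viewpoint of \cref{prop:taylor} (the product of a unit and $(x-\lambda_k)^s(\cdots)$ still has lowest $(x-\lambda_k)$-order exactly $s$ with that leading coefficient). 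Everything else is routine.
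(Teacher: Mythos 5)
Your proposal is correct and follows the paper's own route: the paper likewise reads $\det(V_\Lambda)=\det(L)$ off the factorization $V_\Lambda U^{-1}=L$ and uses the diagonal entries of $L$ (identified via \cref{cor:hasseshift} as $\prod_{i<k}(\lambda_k-\lambda_i)^{m_i}$, each repeated $m_k$ times) to get the product formula. Your extra care about products is not even needed, since \cref{cor:hasseshift} applies directly with the factor $(x-\lambda_k)^s$ and $p(x)=\prod_{i<k}(x-\lambda_i)^{m_i}$.
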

\begin{proof}
This is immediate from the fact that $V_\Lambda U^{-1} = L$ and that the determinant of an upper triangular matrix (or a lower triangular matrix) is the product of its diagonal entries.
\end{proof}

Using \cref{basic-example} with $(\lambda_1,\lambda_2,\lambda_3) = (1,3,6)$, we have the following numerical example. 

\begin{example} 
Let $\Lambda = \{1^{(1)},3^{(2)},6^{(2)}\}$ and $\mathbb{F}$ a field where $1,3,6$ are distinct, or equivalently, $\fchar(\mathbb{F}) \neq 2,3,5$.  Then 
\[
    p_0(x) = 1,\ p_1(x) = (x - 1),\ p_2(x) = (x - 1)(x - 3), 
\]
\[
    p_3(x) = (x - 1)(x - 3)^2,\ p_4(x) = (x - 1)(x - 3)^2(x - 6). 
\]
Also,  
\[
    V_\Lambda = \begin{bmatrix}
        1 & 1 & 1 & 1 & 1 \\
        1 & 3 & 9 & 27 & 81 \\
        0 & 1 & 6 & 27 & 108 \\
        1 & 6 & 36 & 216 & 1296 \\
        0 & 1 & 12 & 108 & 864
    \end{bmatrix} \text{ and }
    U^{-1} = \begin{bmatrix}
        1 & -1 & 3 & -9 & 54 \\
        0 & 1 & -4 & 15 & -99 \\
        0 & 0 & 1 & -7 & 57 \\
        0 & 0 & 0 & 1 & -13 \\
        0 & 0 & 0 & 0 & 1
    \end{bmatrix}.
\]
Thus, 
\[
    V_\Lambda U^{-1} = \begin{bmatrix}
        p_0(1) & \cdots & p_4(1) \\
        p_0(3) & \cdots & p_4(3) \\
        \mathcal{D}^{(1)}p_0(3) & \cdots & \mathcal{D}^{(1)}p_4(3) \\
        p_0(6) & \cdots & p_4(6) \\
        \mathcal{D}^{(1)}p_0(6) & \cdots & \mathcal{D}^{(1)}p_4(6) \\
    \end{bmatrix} =
    \begin{bmatrix}
        1 & 0 & 0 & 0 & 0 \\
        1 & 2 & 0 & 0 & 0 \\
        0 & 1 & 2 & 0 & 0 \\
        1 & 5 & 15 & 45 & 0 \\
        0 & 1 & 8 & 39 & 45
    \end{bmatrix} = L.
\]
Since $\det(U^{-1}) = 1$, it follows that $\det(V_\Lambda) = \det(L) 
= (3 - 1)^2 (6 - 1)^2 (6 - 3)^4$.  
Since $1,3,6$ are distinct in $\mathbb{F}$, we have $\det(L) \neq 0$.  
\end{example}

\begin{remark}
The Chinese remainder theorem states that given the residues of a polynomial modulo $p_i(x)$ for some coprime family of polynomials $\{p_1(x), \ldots, p_q(x)\}$, the polynomial always exists and is uniquely determined, modulo $\prod_{i=1}^q p_i(x)$; see, e.g., \cite{Shoup08}.  When $p_i(x) = (x - \lambda_i)^{m_i}$ for some multiset $\Lambda = \{\lambda_1^{(m_1)}, \ldots, \lambda_q^{(m_q)}\}$, the Chinese remainder theorem is equivalent to the invertibility of $V_\Lambda$.
\end{remark}

\section{Partial fraction decomposition}

Partial fraction decomposition is an important tool for many areas, e.g., calculus, complex analysis, and more.  However, when the denominator has repeated roots, the computation of the partial fraction decomposition is not so straightforward; see, e.g., \cite{MS83}.  

For example, let $\Lambda = \{1^{(1)},3^{(2)},6^{(2)}\}$.  Given a polynomial $p(x)$ of degree at most $4$, how to find the coefficients $a,b,c,d,e$ such that  
\begin{equation}
\label{eq:partial}
    \frac{p(x)}{(x-1)(x-3)^2(x-6)^2} = \frac{a}{(x-1)} + \frac{b + c(x-3)}{(x-3)^2} + \frac{d + e(x-6)}{(x-6)^2}
\end{equation}
holds?

For the real and complex field, when a root $\lambda$ is simple, one may multiply on both sides by $(x-\lambda)$ and take the limit $x\rightarrow \lambda$ to find the corresponding coefficients.  For example, 
\[
    a = p(1) \lim_{x\rightarrow 1} \frac{(x - 1)}{(x-1)(x-3)^2(x-6)^2}.
\]
However, the same idea does not extend to the case for repeated roots and polynomials over arbitrary fields.  Here we demonstrate an efficient approach using the evaluation $\eval(p(x))$ and the Vandermonde matrix $V_\Lambda$, which is similar to the methods in \cite{MS83}.   

We first formulate the problem.  Let $\Lambda = \{\lambda_1^{(m_1)}, \ldots, \lambda_q^{(m_q)}\}$ with $d = m_1 + \cdots + m_q$.  Define 
\[
    \hat{h}_{j,m}(x) = (x - \lambda_j)^m \prod_{\substack{k = 1, \ldots, q\\ k\neq j}}
    (x - \lambda_k)^{m_k}
\]
for $j = 1, \ldots, q$ and $m = 0, \ldots, m_j - 1$. Note that there are $d$ polynomials. 
When we write $\hat{h}_{1,0}(x)$, $\ldots$, $\hat{h}_{q,m_q-1}(x)$, we mean $\hat{h}_{1,0}(x)$, $\ldots$, $\hat{h}_{1,m_1-1}(x)$, 
$\ldots$, $\hat{h}_{q,0}(x)$, $\ldots$, $\hat{h}_{q,m_q-1}(x)$ in order.  By writing \cref{eq:partial} in a common denominator, we observe that finding the partial fraction decomposition is the same as writing $p(x)$ as a linear combination of $\hat{\mathcal{H}}_\Lambda 
= \{\hat{h}_{1,0}(x), \ldots, \hat{h}_{q,m_q-1}(x)\}$.  Let 
\[
    \hat{H}_\Lambda = \begin{bmatrix}
        | & ~ & | \\
        [\hat{h}_{1,0}] & \cdots & [\hat{h}_{q,m_q-1}] \\
        | & ~ & | \\
    \end{bmatrix}.
\]
Then coefficients of the partial fraction decomposition can be found by $\hat{H}_\Lambda^{-1}[p(x)]$.  

\begin{example}
\label{ex:parbyhh}
Let $\Lambda = \{1^{(1)},3^{(2)},6^{(2)}\}$ and $\mathbb{F}$ a field where $1,3,6$ are distinct.  Then  
\[
    \begin{aligned}
        \hat{h}_{1,0}(x) &= (x - 1)^0(x - 3)^2(x - 6)^2, \\
        \hat{h}_{2,0}(x) &= (x - 1)(x - 3)^0(x - 6)^2, \\
        \hat{h}_{2,1}(x) &= (x - 1)(x - 3)^1(x - 6)^2, \\
        \hat{h}_{3,0}(x) &= (x - 1)(x - 3)^2(x - 6)^0, \\
        \hat{h}_{3,1}(x) &= (x - 1)(x - 3)^2(x - 6)^1. \\
    \end{aligned}
\]
By expanding all polynomials, we have 
\[
    \hat{H}_\Lambda = \begin{bmatrix}
        324 & -36 & 108 & -9 & 54 \\
        -324 & 48 & -180 & 15 & -99 \\
        117 & -13 & 87 & -7 & 57 \\
        -18 & 1 & -16 & 1 & -13 \\
        1 & 0 & 1 & 0 & 1
    \end{bmatrix}.
\]
Thus, solving the coefficients in \cref{eq:partial} is equivalent to solving 
\[
    \hat{H}_\Lambda 
    \begin{bmatrix} a \\ b \\ c \\ d \\ e \end{bmatrix}
    = 
    [p(x)].
\]
However, it is time consuming to find the inverse of $\hat{H}_\Lambda$.
\end{example}

The confluent Vandermonde matrix $V_\Lambda$ makes the computation easier.  Suppose we want to solve $\hat{H}_\Lambda \bx = [p(x)]$ for $\bx$.  By multiplying $V_\Lambda$ to the both sides, we have $(V_\Lambda \hat{H}_\Lambda) \bx = \eval(p(x))$.  We will see that $T_\Lambda = V_\Lambda \hat{H}_\Lambda$ is invertible and has nice structure, so that the answer can be readily determined by $\bx = T_\Lambda^{-1} \eval(p(x))$.  

Since $V_\Lambda$ transforms a polynomial into its evaluations, we examine the structure of $V_\Lambda \hat{H}_\Lambda$ by taking the evaluations of $\hat{h}_{j,m}(x)$.  We observe that
\[
    \mathcal{D}^{(r)}\hat{h}_{j,m}(\lambda_i) = 
    \begin{cases}
        0 & \text{if } i\neq j \text{ and } r < m_i, \\
        0 & \text{if } i = j \text{ and } r < m, \\
        \star & \text{if } i = j \text{ and } r = m, \\
        ? & \text{if } i = j \text{ and } r > m. \\
    \end{cases}
\]
Here $\star$ stands for a nonzero number and $?$ stands for any value in the field.  To be more precise, by \cref{cor:hasseshift} we have
\[
    \mathcal{D}^{(m)}\hat{h}_{j,m}(\lambda_j) =
    \mathcal{D}^{(0)} \prod_{\substack{k = 1, \ldots, q\\ k\neq j}} (x - \lambda_k)^{m_k} \Big|_{x = \lambda_j} = 
    \prod_{\substack{k = 1, \ldots, q\\ k\neq j}}
    (\lambda_j - \lambda_k)^{m_k} \neq 0
\]
if $\lambda_i - \lambda_j \ne 0$ whenever $i\ne j$.  Moreover, \cref{cor:hasseshift} also guarantees $\mathcal{D}^{(r)}\hat{h}_{j,m}(\lambda_j) = \mathcal{D}^{(r-1)}\hat{h}_{j,m-1}(\lambda_j)$ if $r \geq 1$ and $m \geq 1$.  \cref{rem:Tstruc} summarizes all these properties.  

\begin{remark}
\label{rem:Tstruc}
For any multiset $\Lambda$, the matrix $T_\Lambda = V_\Lambda \hat{H}_\Lambda$ is a lower triangular matrix with nonzero diagonal entries, so $T_\Lambda$, $V_\Lambda$, and $\hat{H}_\Lambda$ are all invertible, which shows the existence and the uniqueness of the partial fraction decomposition.  Moreover, $T_\Lambda$ is the direct sum of $q$ Toeplitz matrices, where $q$ is the number of distinct values in $\Lambda$.
\end{remark}

Continuing \cref{ex:parbyhh}, we solve the partial fraction decomposition through $T_\Lambda = V_\Lambda \hat{H}_\Lambda$.  

\begin{example}
\label{ex:parbyt}
We first compute $T = V_\Lambda \hat{H}_\Lambda$ through evaluations.  By direct computation, $\mathcal{D}^{(0)}\hat{h}_{1,0}(1) = 100$. 
Also, we may compute that 
\[
    \begin{aligned}
    (x - 1)(x - 6)^2 &= (x - 3 + 2)(x - 3 - 3)^2 \\
    & = 18 - 3(x - 3) - 4(x - 3)^2 + (x - 3)^3,
    \end{aligned}
\]
so $\mathcal{D}^{(0)}\hat{h}_{2,0}(3) = 18$, $\mathcal{D}^{(1)}\hat{h}_{2,0}(3) = -3$, and $\mathcal{D}^{(0)}\hat{h}_{2,1}(3) = 0$, $\mathcal{D}^{(1)}\hat{h}_{2,1}(3) = 18$.  Other evaluations can be done in similar manners.  Therefore, 
\[
    T = V_\Lambda \hat{H}_\Lambda = \begin{bmatrix}
        \hat{h}_{1,0}(1) & \cdots & \hat{h}_{3,1}(1) \\
        \hat{h}_{1,0}(3) & \cdots & \hat{h}_{3,1}(3) \\
        \mathcal{D}^{(1)}\hat{h}_{1,0}(3) & \cdots & \mathcal{D}^{(1)}\hat{h}_{3,1}(3) \\
        \hat{h}_{1,0}(6) & \cdots & \hat{h}_{3,1}(6) \\
        \mathcal{D}^{(1)}\hat{h}_{1,0}(6) & \cdots & \mathcal{D}^{(1)}\hat{h}_{3,1}(6) \\
    \end{bmatrix} =
    \begin{bmatrix}
        100 & 0 & 0 & 0 & 0 \\
        0 & 18 & 0 & 0 & 0 \\
        0 & -3 & 18 & 0 & 0 \\
        0 & 0 & 0 & 45 & 0 \\
        0 & 0 & 0 & 39 & 45
    \end{bmatrix}.
\]
By the structure of $T_\Lambda$, its inverse can be easily found as 
\[
    T^{-1}_\Lambda = \begin{bmatrix}
        \frac{1}{100} & 0 & 0 & 0 & 0 \\
        0 & \frac{1}{18} & 0 & 0 & 0 \\
        0 & \frac{1}{108} & \frac{1}{18} & 0 & 0 \\
        0 & 0 & 0 & \frac{1}{45} & 0 \\
        0 & 0 & 0 & -\frac{13}{675} & \frac{1}{45}
    \end{bmatrix}.
\]
Note that $\fchar(\mathbb{F}) \neq 2,3,5$, the entries in $T_\Lambda^{-1}$ are well defined.
With $\eval(p(x))$ computed, the coefficients of partial fraction decomposition of $p(x)$ are recorded in $T^{-1}_\Lambda \eval(p(x))$.  
\end{example}

\begin{theorem}
\label{thm:parbyt}
Let $\Lambda = \{\lambda_1^{(m_1)}, \dots, 
\lambda_q^{(m_q)}\}$ with $m_1 + \cdots + m_q = d$.  For any given polynomial $p(x)$ of degree at most $d-1$, $\hat{H}^{-1}_\Lambda [p(x)] = T^{-1}_\Lambda \eval(p(x))$ records the coefficients of $f(x)$ as a linear combination of $\hat{\mathcal{H}}_\Lambda$, which gives the partial fraction decomposition of $f(x)/\prod_{i=1}^q (x - \lambda_i)^{m_i}$.
\end{theorem}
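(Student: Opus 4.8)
The plan is to show that the single identity $V_\Lambda\hat{H}_\Lambda = T_\Lambda$, together with the invertibility statements collected in \cref{rem:Tstruc}, forces $\hat{H}_\Lambda^{-1}[p(x)] = T_\Lambda^{-1}\eval_\Lambda(p(x))$, and then to observe that the coordinates of this common vector are exactly the partial fraction coefficients. First I would recall the reformulation carried out just before \cref{ex:parbyhh}: clearing denominators in the sought decomposition $p(x)/\prod_{i}(x-\lambda_i)^{m_i} = \sum_{j}\bigl(a_{j,0}+a_{j,1}(x-\lambda_j)+\cdots+a_{j,m_j-1}(x-\lambda_j)^{m_j-1}\bigr)/(x-\lambda_j)^{m_j}$ turns it into $p(x)=\sum_{j,m} a_{j,m}\,\hat{h}_{j,m}(x)$, so that the column vector $\bx$ whose entries are the $a_{j,m}$ (listed in the agreed order) satisfies $\hat{H}_\Lambda\bx = [p(x)]$. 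Since $\deg p\le d-1$, $[p(x)]$ is a well-defined vector in $\mathbb{F}^d$, and since $\hat{H}_\Lambda$ is invertible by \cref{rem:Tstruc}, such an $\bx$ exists and is unique, namely $\bx = \hat{H}_\Lambda^{-1}[p(x)]$; this already gives existence and uniqueness of the decomposition.

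Next I would left-multiply the relation $\hat{H}_\Lambda\bx = [p(x)]$ by the confluent Vandermonde matrix $V_\Lambda$. On the right we use the fundamental identity $V_\Lambda[p(x)] = \eval_\Lambda(p(x))$ from \cref{sec:eval}; on the left we use $V_\Lambda\hat{H}_\Lambda = T_\Lambda$, which is the definition of $T_\Lambda$ combined with the evaluation computations $\mathcal{D}^{(r)}\hat{h}_{j,m}(\lambda_i)$ recorded just before \cref{rem:Tstruc}. This gives $T_\Lambda\bx = \eval_\Lambda(p(x))$. Because $T_\Lambda$ is lower triangular with nonzero diagonal (again \cref{rem:Tstruc}) it is invertible, so $\bx = T_\Lambda^{-1}\eval_\Lambda(p(x))$. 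Comparing with the previous paragraph, $\hat{H}_\Lambda^{-1}[p(x)] = \bx = T_\Lambda^{-1}\eval_\Lambda(p(x))$, and the coordinates of $\bx$, read off blockwise according to the structure of $T_\Lambda$ described in \cref{rem:Tstruc}, are precisely the coefficients $a_{j,m}$ of the partial fraction decomposition.

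I do not expect a genuine obstacle: the result is essentially the assembly of \cref{rem:Tstruc} with the defining property of $V_\Lambda$. The points that need care are purely bookkeeping --- making sure the ordering of $\hat{\mathcal{H}}_\Lambda = \{\hat{h}_{1,0},\ldots,\hat{h}_{q,m_q-1}\}$ matches the ordering of the rows of $V_\Lambda$, hence of $\eval_\Lambda$, so that ``$\bx$ records the coefficients'' is literally true block by block; noting that the hypothesis $\deg p\le d-1$ is exactly what makes $[p(x)]\in\mathbb{F}^d$ and the whole system square; and observing that no field hypothesis beyond ``$\lambda_1,\ldots,\lambda_q$ distinct'' is used, since the invertibility of $T_\Lambda$ only requires its diagonal entries $\prod_{k\neq j}(\lambda_j-\lambda_k)^{m_k}$ to be nonzero. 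If desired I would close with a remark that the practical gain is that $T_\Lambda^{-1}$ is cheap to form, being a direct sum of $q$ triangular Toeplitz blocks, whereas $\hat{H}_\Lambda^{-1}$ is not --- which is the whole reason for routing the computation through $V_\Lambda$.
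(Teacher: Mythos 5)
Your proposal is correct and follows essentially the same route as the paper: the paper's justification is exactly the discussion preceding the theorem, namely rewriting the decomposition over a common denominator as $\hat{H}_\Lambda\bx=[p(x)]$, left-multiplying by $V_\Lambda$ to get $T_\Lambda\bx=\eval_\Lambda(p(x))$, and invoking the triangular structure and invertibility of $T_\Lambda$ established via \cref{cor:hasseshift} and recorded in \cref{rem:Tstruc}. Your bookkeeping remarks (ordering of $\hat{\mathcal{H}}_\Lambda$, the role of $\deg p\le d-1$, and distinctness of the $\lambda_j$ in $\mathbb{F}$) are consistent with the paper's treatment.
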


\begin{figure}[h]
    \centering
    \begin{tikzpicture}
        \node[draw=none, rectangle] (1) at (-3,0) {$[p(x)]$};
        \node[draw=none, rectangle] (2) at (3,-1) {$\eval(p(x))$};
        \node[draw=none, rectangle, align=center] (3) at (3,1) {partial fraction\\decomposition};
        \draw[-triangle 45] (1.east) --node[midway,below,draw=none,rectangle] {$V_\Lambda$} (2.west);
        \draw[-triangle 45] (3.west) --node[midway,above,draw=none,rectangle] {$\hat{H}_\Lambda$} (1.east);
        \draw[-triangle 45] (3.south) --node[midway,right,draw=none,rectangle] {$T_\Lambda$} (2.north);
    \end{tikzpicture}
    \caption{Relations between $[p(x)]$, $\eval(p(x))$, and the partial fraction decomposition.}
    \label{fig:relations}
\end{figure}

In conclusion, \cref{fig:relations} illustrates the relations between $[p(x)]$, $\eval(p(x))$, and the partial fraction decomposition.  With $\Lambda$ given, each of $V_\Lambda$, $\hat{H}_\Lambda$, and $T_\Lambda$ can be computed directly.  By \cref{rem:Tstruc}, it is not too hard to find $T_\Lambda^{-1}$.  Thus, \cref{thm:parbyt} states that $\hat{H}_\Lambda^{-1} = T_\Lambda^{-1} V_\Lambda$ provides a more efficient way to compute the partial fraction decomposition.

\section{Hermite interpolation polynomials}

Given the evaluations of a polynomial at different points, the Lagrange interpolation polynomials provide a powerful tool in reconstructing the polynomial.  What happens if the evaluations have derivatives involved?

Let $\Lambda = \{\lambda_1^{(m_1)}, \ldots, \lambda_q^{(m_q)}\}$ with $d = m_1 + \cdots + m_q$.  Let $H_\Lambda = V_\Lambda^{-1}$.  Then we may find polynomials $\mathcal{H}_\Lambda = \{h_{1,0}(x), \ldots, h_{q,m_q-1}(x)\}$ such that 
\[
    H_\Lambda = \begin{bmatrix}
        | & ~ & | \\
        [h_{1,0}(x)] & \cdots & [h_{q,m_q-1}(x)] \\
        | & ~ & | \\
    \end{bmatrix}.
\]
The set of polynomials $\mathcal{H}_\Lambda$ is called the \emph{Hermite interpolation polynomials} of $\Lambda$.  Given any polynomial $p(x)$ of degree at most $d-1$, one may recover the polynomial from the information $\eval_\Lambda(p(x))$ by  
\[
    \relax[p(x)] = V_\Lambda^{-1} \eval_\Lambda(p(x)) = H_\Lambda \eval_\Lambda(p(x))
\]
and write $p(x)$ as a linear combination of polynomials in $\mathcal{H}_\Lambda$.

In general, computing $V_\Lambda^{-1}$ and formulating the Hermite interpolation polynomials are not straightforward; see, e.g., Respondek~\cite{Respondek24} for a survey.  With \cref{fig:relations} in mind, finding the Hermite interpolation polynomials becomes easier with $V_\Lambda^{-1} = \hat{H}_\Lambda^{-1} T_\Lambda^{-1}$.

\begin{example} Let $\Lambda = \{1^{(1)},3^{(2)},6^{(2)}\}$ and $\mathbb{F}$ a field where $1,3,6$ are distinct.
Suppose $p(x)\in {\mathbb F}[x]$ is a polynomial of degree at most $4$,
and $p(1) = c_1$, $p(3) = c_2$, $\mathcal{D}^{(1)}(p(3)) = c_3$, $p(6) = c_4$, $\mathcal{D}^{(1)}(p(6)) = c_5$ are given.  
 Then
\[
    V_\Lambda [p(x)] = \eval_\Lambda(p(x)) = \begin{bmatrix} c_1 \\ c_2 \\ c_3 \\ c_4 \\ c_5 \end{bmatrix}.
\]
With $H_\Lambda = V_\Lambda^{-1}$, we have $H_\Lambda \eval_\Lambda(p(x)) = [p(x)]$.  Let $T_\Lambda^{-1}$ and $\hat{h}_{1,0}, \ldots, \hat{h}_{3,1}$ be as in \cref{ex:parbyt}.  Since $H_\Lambda = V_\Lambda^{-1} = \hat{H}_\Lambda T^{-1}_\Lambda$, the Hermite interpolation polynomials are 
\[
    \begin{aligned}
        h_{1,0}(x) &= \frac{1}{100}\hat{h}_{1,0}(x) = \left(\frac{1}{100}\right)(x - 3)^2(x - 6)^2, \\
        h_{2,0}(x) &= \frac{1}{18}\hat{h}_{2,0}(x) + \frac{1}{108}h_{2,1}(x) = (x - 1)\left(\frac{1}{18} + \frac{1}{108}(x-3)\right)(x - 6)^2, \\
        h_{2,1}(x) &= \frac{1}{18}\hat{h}_{2,1}(x) = (x - 1)\left(\frac{1}{18}(x - 3)\right)(x - 6)^2, \\
        h_{3,0}(x) &= \frac{1}{45}\hat{h}_{3,0}(x) - \frac{13}{675}\hat{h}_{3,1}(x) = (x - 1)(x - 3)^2\left(\frac{1}{45} - \frac{13}{675}(x - 6)\right), \\
        h_{3,1}(x) &= \frac{1}{45}\hat{h}_{3,1}(x) = (x - 1)(x - 3)^2\left(\frac{1}{45}(x - 6)\right). \\
    \end{aligned}
\]
For each of these polynomials, its evaluations given by $\Lambda$ are all $0$ except for a unique $1$.  Therefore, for any given evaluations $c_1, \ldots, c_5$, we have the Hermite interpolation
\[
    p(x) = c_1h_{1,0}(x) + c_2h_{2,0}(x) + c_3h_{2,1}(x) + c_4h_{3,0}(x) + c_5h_{3,1}(x).
\]
\end{example}

Note that computing $T^{-1}_\Lambda$ is much easier than computing $V_\Lambda^{-1}$.  
In fact, the author of \cite{HP02} used similar ideas to compute the inverse of a 
confluent Vandermonde matrix, and a faster algorithm was later proposed by Respondek~\cite{Respondek21}.  The Hermite interpolation is also used to calculate the functions of companion matrices 
\cite{Eller87, MacDuffee50}.

\section{Companion matrix}

Let 
\[
    p(x) = c_0 + c_1x + \cdots + c_{d}x^{d} \text{ with } c_d = 1
\]
be a polynomial of degree $d$.  The \emph{companion matrix} of $p(x)$ is defined as 
\[
    C = \begin{bmatrix}
        0 & \cdots & 0 & -c_0 \\
        1 & \ddots & \vdots & -c_{1} \\
        ~ & \ddots & 0 & \vdots \\
        0 & ~ & 1 & -c_{d-1}
    \end{bmatrix}
\]
When 
\[
    p(x) = \prod_{\lambda\in\Lambda}(x - \lambda).
\]
is the monic polynomial with roots in $\Lambda$, we write $C_\Lambda$ as the companion matrix of $p(x)$.
Here, we always assume that the roots lie in the field. In particular, the discussion always works for
an algebraically closed field.

Suppose $\Lambda = \{\lambda_1^{(m_1)}, \ldots, \lambda_q^{(m_q)}\}$.  
Let $J_{\lambda,m}$ be the $m\times m$ lower-triangular Jordan block of $\lambda$ and define 
\[
    J_\Lambda = \begin{bmatrix}
        J_{\lambda_1,m_1} & ~ & ~ \\
        ~ & \ddots & ~ \\
        ~ & ~ & J_{\lambda_q,m_q}
    \end{bmatrix}.
\]
Aitken and Turnbull \cite{AT32} stated that 
\[
    V_\Lambda C_\Lambda V_\Lambda^{-1} = J_\Lambda
\]
is the Jordan canonical form of $C_\Lambda$ with exactly $1$ Jordan block for each distinct eigenvalue.  
In particular, when all elements in $\Lambda$ are distinct, $V_\Lambda C_\Lambda V_\lambda^{-1}$ is a diagonal matrix.  

\begin{example} Let $\Lambda = \{\lambda_1, \lambda_2^{(2)}, \lambda_3^{(2)}\}$ and $p(x) = (x-\lambda_1)(x-\lambda_2)^2(x-\lambda_3)^2$
such that $\lambda_i-\lambda_j \ne 0$ whenever $i\ne j$.
Then $V_\Lambda C_f V_\lambda^{-1}$ will be a direct sum of $[\lambda_1], \begin{bmatrix} \lambda_2 & 0 \cr 
1 & \lambda_2\end{bmatrix}$ and $
 \begin{bmatrix} \lambda_3 & 0 \cr 
1 & \lambda_3\end{bmatrix}$.
\end{example}

This Jordan form result is elegant, but no formal proof was given in \cite{AT32}.
With the discussions in the previous sections, 
we can prove the Jordan form result above and gain better insight into it.  
Recall that $\eval_\Lambda(f(x))$ is a vector evaluating the polynomial $f(x)$.  
By definition, we have $\eval_\Lambda(p(x)) = \bzero$.  Moreover, each evaluation has the following properties.  

\begin{proposition}
\label{prop:prodrule}
Let $\Lambda$ be a multiset and $f(x)\in {\mathbb F}[x]$. Then
\begin{enumerate}
    \item $\mathcal{D}^{(0)}(xf(x))\big|_{x = \lambda} = \lambda \mathcal{D}^{(0)}(f(x))\big|_{x = \lambda}$, 
    \item $\mathcal{D}^{(r)}(xf(x))\big|_{x = \lambda} = \mathcal{D}^{(r-1)}(f(x))\big|_{x = \lambda} + \lambda \mathcal{D}^{(r)}(f(x))\big|_{x = \lambda}$ if $r \geq 1$, and   
    \item $\eval_\Lambda(xf(x)) = J_\Lambda \eval_\Lambda(f(x))$.
\end{enumerate}
\end{proposition}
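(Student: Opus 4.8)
The plan is to establish parts (1) and (2) by a short Hasse-derivative computation, and then to read off (3) from the block structure of $J_\Lambda$.

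For (1) and (2) I would use the decomposition $x = (x-\lambda) + \lambda$, so that $xf(x) = (x-\lambda)f(x) + \lambda f(x)$. Since the Hasse derivative is linear, $\mathcal{D}^{(r)}(xf(x)) = \mathcal{D}^{(r)}\big((x-\lambda)f(x)\big) + \lambda\,\mathcal{D}^{(r)}(f(x))$. Evaluating at $x = \lambda$ and applying \cref{cor:hasseshift} with $m = 1$: the first term equals $\mathcal{D}^{(r-1)}f(\lambda)$ when $r \geq 1$ and vanishes when $r = 0$ (the ``otherwise'' case, since $r < m$). This yields (2) for $r\ge 1$ and (1) for $r = 0$ simultaneously. (Alternatively one can invoke the identity $\mathcal{D}^{(r)}g(\lambda) = [y^r]g(\lambda+y)$ and expand $(\lambda+y)f(\lambda+y) = \lambda f(\lambda+y) + y\,f(\lambda+y)$; either route is a one-line check.)

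For (3), recall that $\eval_\Lambda(g(x))$ stacks the blocks $\big(\mathcal{D}^{(0)}g(\lambda_i),\ldots,\mathcal{D}^{(m_i-1)}g(\lambda_i)\big)\trans$ over $i = 1,\ldots,q$, while $J_\Lambda = \bigoplus_{i=1}^q J_{\lambda_i,m_i}$, where the lower-triangular Jordan block $J_{\lambda,m}$ sends $(v_0,\ldots,v_{m-1})\trans$ to the vector whose $r$-th entry is $\lambda v_0$ when $r = 0$ and $v_{r-1} + \lambda v_r$ when $1 \le r \le m-1$. Comparing this action with parts (1) and (2) applied to $f$ at each $\lambda_i$ shows that the $r$-th entry of the $i$-th block of $\eval_\Lambda(xf(x))$ coincides with the $r$-th entry of the $i$-th block of $J_\Lambda\,\eval_\Lambda(f(x))$ for every $i$ and every $r$, hence the two vectors agree.

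There is no serious obstacle here; the computation is entirely routine. The only point requiring care is the index bookkeeping: the top entry ($r = 0$) of each Jordan block is governed by (1) — there being no ``$(-1)$-st derivative'' to supply the subdiagonal contribution — whereas every lower entry is governed by (2). Matching the $0$-indexed derivative orders $r = 0,\ldots,m_i-1$ with the subdiagonal shift of $J_{\lambda_i,m_i}$ is essentially the whole content of the argument, and I would present it exactly in that order.
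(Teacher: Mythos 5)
Your proposal is correct and follows essentially the same route as the paper: the decomposition $xf(x) = (x-\lambda)f(x) + \lambda f(x)$ together with linearity and \cref{cor:hasseshift} is exactly the paper's argument for (2), and (3) is likewise read off as the matrix--vector form of (1) and (2). The only cosmetic difference is that you obtain (1) as the $r=0$ case of the same computation, whereas the paper notes it directly since both sides equal $\lambda f(\lambda)$.
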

\begin{proof}
The first identity holds since both sides equal to $\lambda f(\lambda)$.  

For the field of real numbers, the second identity could be written as 
\[
    \frac{1}{r!}\cdot \frac{d^r}{dx^r} (xf(x))\Big|_{x = \lambda} = \frac{1}{r!}\left( \binom{r}{1}\frac{d^{r-1}}{dx^{r-1}} (f(x)) + x\cdot \frac{d^r}{dx^r} (xf(x)) \right)\Big|_{x = \lambda}
\]
and is the product rule applied to $xf(x)$.  However, we emphasize that the identity remains valid from the Hasse derivative point of view, and the proof is even more intuitive.  By direct computation and \cref{cor:hasseshift}, we have 
\[
    \begin{aligned}
        \mathcal{D}^{(r)}(xf(x))\big|_{x = \lambda} &= \mathcal{D}^{(r)}((x - \lambda)f(x) + \lambda f(x))\big|_{x = \lambda} \\
        &= \mathcal{D}^{(r)}((x - \lambda)f(x))\big|_{x = \lambda} + \mathcal{D}^{(r)}(\lambda f(x))\big|_{x = \lambda} \\
        &= \mathcal{D}^{(r-1)}(f(x))\big|_{x = \lambda} + \lambda\mathcal{D}^{(r)}(f(x))\big|_{x = \lambda}.
    \end{aligned}
\]

Finally, the third identity is simply a matrix-vector form of the first two identities.  
\end{proof}

Let $p_j(x) = x^j$.  Then the columns of the $d\times d$ identity matrix $I$ are $[p_0(x)], \ldots, [p_{d-1}(x)]$ and 
\[
    C_\Lambda = \begin{bmatrix}
        | & ~ & | & | \\
        [xp_0(x)] & \cdots & [xp_{d-2}(x)] & [xp_{d-1}(x) - p(x)] \\
        | & ~ & | & | \\
    \end{bmatrix}.
\]
With \cref{prop:prodrule} and the fact $\eval_\Lambda(p(x)) = \bzero$, we have 
\[
    V_\Lambda C_\Lambda = 
    J_\Lambda \begin{bmatrix}
        | & ~ & | \\
        \eval_\Lambda(p_0(x)) & ~ & \eval_\Lambda(p_{d-1}(x)) \\
        | & ~ & | \\
    \end{bmatrix} = J_\Lambda V_\Lambda I = J_\Lambda V_\Lambda.
\]
Therefore, $V_\Lambda C_\Lambda V_\Lambda^{-1} = J_\Lambda$ follows.

Moreover, with our discussion in earlier sections, $V_\Lambda^{-1}$ comes from the Lagrange interpolation polynomials when all elements in $\Lambda^{-1}$ are distinct and from the Hermite interpolation polynomials in general.  Let $\mathcal{E}_d = \{p_0(x), \ldots, p_{d-1}(x)\}$ and $\mathcal{H}_\Lambda$ the Hermite interpolation polynomials.  Then both $\mathcal{E}_d$ and $\mathcal{H}_\Lambda$ are bases of the space $\mathcal{P}_{d-1}$ of polynomials of degree at most $d-1$.  Consider the linear transformation $\phi: \mathcal{P}_{d-1} \rightarrow \mathcal{P}_{d-1}$ defined by $\phi(f(x)) = xf(x) \pmod{p(x)}$.  Then $C_\Lambda$ can be viewed as the matrix representation of $\phi$ with respect to $\mathcal{E}_d$, while $J_\Lambda$ is the matrix representation of $\phi$ with respect to $\mathcal{H}_\Lambda$.  Thus, $V_\Lambda$ and $H_\Lambda = V_\Lambda^{-1}$ are the change of basis matrices between $\mathcal{E}_d$ and $\mathcal{H}_\Lambda$.  This provides a comprehensive understanding of the Jordan decomposition of $C_\Lambda$.

The decomposition $V_\Lambda C_\Lambda V_\Lambda^{-1} = J_\Lambda$ was then used for solving differential equations in \cite{Brand68} and yet more applications in \cite{MacDuffee50,Watkins78,EM95}.  

Moreover, Gilbert \cite{Gilbert88} and the references therein considered integer matrices with integer eigenvalues with integer (left and right) generalized eigenvectors.  The paper pointed out that $C_\Lambda$ is one of such matrices when $\Lambda$ contains only two distinct values that are integers with difference $1$.  This can again be seen from our discussion.  When $\Lambda$ contains only one integer value or two distinct integer values with difference $1$, we have $\det(V_\Lambda) = 1$ and $V_\Lambda^{-1}$ is an integer matrix.  In these cases, $V_\Lambda$, $C_\Lambda$, $V_\Lambda^{-1}$, and $J_\Lambda$ are all integer matrices.

\section*{Acknowledgements}
We thank the referee for their careful reading and
helpful suggestions, which improved the quality of the paper.
Chi-Kwong Li is an affiliate member of the Institute for Quantum Computing, University of Waterloo;
 his research was partially supported by the Simons Foundation Grant 851334.
Jephian C.-H. Lin was supported by the National Science and Technology Council of Taiwan (grant no.\ NSTC-112-2628-M-110-003 and grant no.\ NSTC-113-2115-M-110-010-MY3).




\end{document}